\documentclass[12pt, twoside]{amsart}

\usepackage[margin=1in]{geometry}
\usepackage{mathtools}
\usepackage{booktabs}
\usepackage{float}
\usepackage{longtable}

\usepackage{xcolor}
\usepackage[colorlinks=true]{hyperref}
\definecolor{darkblue}{RGB}{0,50,150}
\hypersetup{linkcolor=darkblue, citecolor=darkblue, urlcolor=darkblue}

\usepackage{url}

\usepackage{amsmath,amsthm,amsfonts,amssymb}

\newtheorem{theorem}{Theorem}[section]

\newtheorem{corollary}[theorem]{Corollary}
\newtheorem{conjecture}[theorem]{Conjecture}
\newtheorem{remark}[theorem]{Remark}

\theoremstyle{definition}
\newtheorem{definition}[theorem]{Definition}

\subjclass[2020]{11N64, 11N05, 11Y70}

\keywords{Arithmetic functions, Euler's totient function, sum-of-divisors function, prime quadruplets, semiprimes}

\title{Prime Quadruplets and Jump Conditions on Arithmetic Functions}
\author{Himaghna Roy Choudhury, Shicheng Wei}

\newcommand{\Cpp}{{C\nolinebreak[4]\hspace{-.05em}\raisebox{.4ex}{\tiny\bf ++}}}
\begin{document}

\begin{abstract}
We provide progress on the characterization of composite integers $n$ that satisfy the jump conditions $\varphi(n+12)=\varphi(n)+12$ and $\sigma(n+12)=\sigma(n)+12$ simultaneously. While it is known that prime quadruplets $(p,p+2,p+6,p+8)$ generate solutions $n=p(p+8)$, the complete characterization remains an open conjecture. We prove that this characterization is complete when $n$ and $n+12$ are both squarefree semiprimes, and that no solution $n$ can be a prime power. Furthermore, a complete search up to $10^{12}$ resulted in no counterexamples to the conjecture. If this conjecture is proven true, and there are infinitely many such solutions, then it can be proved that there are infinitely many prime quadruplets.
\end{abstract}

\maketitle

\section{Introduction}\label{sec:intro}

% A classic topic to study in number theory is the behavior of arithmetic functions, such as $\varphi$, Euler's totient function, and $\sigma$, the sum-of-divisors function, over shifted intervals. For instance, characterizing the solutions to $\sigma(n)=\sigma(n+1)$ remains a well-known open conjecture (see Guy \cite[Section B13]{Guy}). Another equation, namely $\sigma(n+2) = \sigma(n) + 2$, has solutions when $n$ and $n+2$ are twin primes, but composite solutions still exist \cite[Section B13]{Guy}. Other equations may inquire about the relationships between the two arithmetic functions, such as $\varphi(m)=\sigma(n)$, which can be shown to have infinitely many solutions if there are infinite twin primes.

A classic topic to study in number theory is the behavior of arithmetic functions, such as $\varphi$, Euler's totient function, and $\sigma$, the sum-of-divisors function, over shifted intervals. For instance, Graham, Holt, and Pomerance~\cite{GrahamHoltPomerance1999} studied the solutions to $\varphi(n)=\varphi(n+k)$, where $k$ is a fixed positive integer. They constructed families of solutions for even $k$ and obtained an asymptotic formula for the number of solutions under a quantitative form of the prime $k$-tuple conjecture. For the equation $\sigma(n)=\sigma(n+k)$, Weingartner~\cite{Weingartner2011} showed that there are infinitely many solutions for every fixed positive even $k$, assuming Schinzel's hypothesis H.

Yamada~\cite{Yamada2017} studied both equations and obtained upper bounds for the number of solutions outside certain specified families, also considering the more general case of linear arguments. Ford~\cite{Ford2022} proved unconditionally that $\varphi(n)=\varphi(n+k)$ has infinitely many solutions for certain shifts $k$, including some positive even $k\leq3570$. He also showed that the equation $\sigma(n)=\sigma(n+k)$ has infinitely many solutions for a positive proportion of positive integers $k$.

Other equations concern the relationship between the two arithmetic functions, such as $\varphi(m)=\sigma(n)$. If $p$ and $p+2$ are twin primes, then $\varphi(p+2)=p+1=\sigma(p)$, so infinitely many twin primes would give infinitely many solutions. However, Ford, Luca, and Pomerance~\cite{FordLucaPomerance2010} proved unconditionally that the two functions have infinitely many common values. Thus, the infinitude of solutions to this equation does not depend on the twin prime conjecture.

In this paper we study a simultaneous jump condition for both
arithmetic functions. More specifically, in the 2004 arXiv preprint
\emph{Prove or Disprove: 100 Conjectures from the OEIS}
\cite{Stephan}, R. Stephan proposed Conjecture (33), which we
state here with the corrected residue:
\begin{equation}\label{eq:stephan33}
\bigl\{\text{composite } n \;\big|\; \varphi(n+12) = \varphi(n) + 12 \;\wedge\; \sigma(n+12) = \sigma(n) + 12\bigr\}
\;\Longrightarrow\; n \equiv 65 \pmod{72}.
\end{equation}

The preprint gives the residue $64\pmod{72}$, which is a
typographical error. The corrected residue $65\pmod{72}$ 
also appears in Stephan's conjecture in the OEIS entry
\texttt{A056777} \cite{OEIS-A056777}. 
For example, when $n=65$, we have $\varphi(65)=48$,
$\varphi(77)=60$, $\sigma(65)=84$, and $\sigma(77)=96$.
Thus,
\[
\varphi(77)-\varphi(65)=60-48=12,
\qquad
\sigma(77)-\sigma(65)=96-84=12,
\]
so $n=65$ satisfies both jump conditions.

In addition, in the same sequence entry, Jud McCranie conjectured that all solutions satisfying the jump conditions can be completely characterized by prime quadruplets, that is, prime tuples of the form $(p,p+2,p+6,p+8)$. We formulate this into a conjecture below.

\begin{conjecture}\label{McCranie}
    Every composite integer $n$ satisfying both $\varphi(n+12) = \varphi(n) + 12$ and $\sigma(n+12) = \sigma(n) + 12$ is of the form $n=p(p+8)$, where $(p,p+2,p+6,p+8)$ is a prime quadruplet with $p\ge5$. 
\end{conjecture}

Notice that if Conjecture~\ref{McCranie} holds, then Stephan's corrected residue claim follows as a corollary, and can be easily shown with the Chinese Remainder Theorem. Additionally, Conjecture~\ref{McCranie} effectively bridges these jump conditions about $\varphi$ and $\sigma$ with the Hardy--Littlewood prime $k$-tuple conjecture for $k=4$ \cite{HardyLittlewood}. If Conjecture~\ref{McCranie} is true and it can be shown that there are infinitely many solutions to these jump conditions, then it would imply infinitely many prime quadruplets.

In this paper, we prove that the prime quadruplet construction gives every solution in the case where $n$ and $n+12$ are squarefree semiprimes, and that no composite prime power satisfies both jump conditions.

We assume throughout that $n \ge 4$ is composite. The functions $\varphi$ and $\sigma$ are defined on $\mathbb{Z}_{\ge 1}$ in the usual way. Recall that they are multiplicative, where $\varphi(p) = p-1$, $\sigma(p) = p+1$ for primes $p$, and $\varphi(pq) = (p-1)(q-1)$, $\sigma(pq) = (p+1)(q+1)$ for distinct primes $p$ and $q$.

\section{Characterization in the Semiprime Case}\label{sec:semiprime-char}

We now show that within the class of squarefree semiprime solutions, the prime quadruplet construction gives a complete characterization.

\begin{definition}
A \emph{prime quadruplet} is a $4$-tuple of primes of the form $(p,\, p+2,\, p+6,\, p+8)$. The smallest primes $p$ of prime quadruplets form the OEIS sequence \texttt{A007530}, beginning with $5, 11, 101, 191,$ $ 821, 1481, 1871, 2081, \ldots$ \cite{OEIS-A007530}. The Hardy--Littlewood prime $k$-tuple conjecture, which remains open, predicts that there are infinitely many \cite{HardyLittlewood}.
\end{definition}

\begin{theorem}\label{thm:semiprime-char}
Suppose $n = pq$ and $n+12 = p'q'$ are both products of two distinct primes (with $p < q$ and $p' < q'$). Then both jump conditions
\[
\varphi(n+12) - \varphi(n) = 12, \qquad \sigma(n+12) - \sigma(n) = 12
\]
hold if and only if $p \ge 5$, $q = p+8$, and $\{p', q'\} = \{p+2,\, p+6\}$. That is, $(p,\, p',\, q',\, q) = (p,\, p+2,\, p+6,\, p+8)$ is a prime quadruplet.\end{theorem}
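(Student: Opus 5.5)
Write $s = p+q$ and $s' = p'+q'$. The plan is to reduce both jump conditions to a single equation about sums of primes, and then to a difference-of-squares equation with only finitely many solutions.

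\textbf{Reducing the jump conditions.} Since $n$ and $n+12$ are products of two distinct primes, the introduction gives
\[
\varphi(n) = pq - s + 1, \qquad \sigma(n) = pq + s + 1,
\]
and similarly for $n+12 = p'q' = pq + 12$. Substituting,
\[
\varphi(n+12) - \varphi(n) = 12 - (s' - s), \qquad \sigma(n+12) - \sigma(n) = 12 + (s' - s).
\]
So each jump condition on its own is equivalent to $s = s'$. The pair of conditions is therefore equivalent to: $p'+q' = p+q$ and $p'q' = pq + 12$.

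\textbf{Difference of squares.} Put $d = q - p > 0$ and $d' = q' - p' > 0$. From $d^2 = s^2 - 4pq$ and $d'^2 = s^2 - 4pq - 48$ we get
\[
d^2 - d'^2 = 48 .
\]
Also $d \equiv s \equiv d' \pmod 2$, so $d - d'$ and $d + d'$ are both even. The factorizations $48 = (d-d')(d+d')$ into two even factors with $d - d' < d + d'$ are
\[
2 \cdot 24, \qquad 4 \cdot 12, \qquad 6 \cdot 8,
\]
which give $(d, d') = (13, 11)$, $(8, 4)$, $(7, 1)$ respectively.

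\textbf{Case analysis.}
\begin{itemize}
\item If $d$ is odd, then $s$ is odd, so one of $p, q$ is $2$. Since $p < q$, this forces $p = 2$ and $q = 2 + d$.
\begin{itemize}
\item $d = 13$ gives $q = 15$, which is not prime.
\item $d = 7$ gives $q = 9$, which is not prime.
\end{itemize}
\item The remaining case is $d = 8$, $d' = 4$. Then $q = p + 8$. Since $s = s'$, the centres agree: $(p'+q')/2 = p + 4$. With $q' - p' = 4$ this gives $p' = p + 2$ and $q' = p + 6$.
\end{itemize}

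\textbf{Ruling out small $p$.}
\begin{itemize}
\item $p = 2$ gives $q = 10$, which is not prime.
\item $p = 3$ gives $p + 6 = 9$, which is not prime.
\end{itemize}
Hence $p \ge 5$, and $(p, p+2, p+6, p+8)$ is a prime quadruplet.

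\textbf{Converse.} If $(p, p+2, p+6, p+8)$ is a prime quadruplet, then
\[
(p+2)(p+6) = p(p+8) + 12 \quad\text{and}\quad (p+2) + (p+6) = p + (p+8).
\]
Both numbers are products of two distinct primes, so by the reduction in the first step both jump conditions hold.

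The only delicate points are the parity argument restricting the factor pairs of $48$, and the check that the odd-$d$ cases fail. Both are short finite verifications.
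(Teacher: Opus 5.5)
Your proof is correct and follows the paper's argument essentially step for step. Both reduce the two jump conditions to $p+q=p'+q'$, derive $(q-p)^2-(q'-p')^2=48$, use parity to reach the three solutions $(13,11),(8,4),(7,1)$, and isolate $(8,4)$. The only cosmetic difference is how $p=2$ is excluded: you observe that the odd-$d$ solutions force $q\in\{9,15\}$, which is composite, whereas the paper notes that $n+12$ even forces $p'=2$ and then contradicts the sum condition.
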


\begin{proof}
($\Leftarrow$) Using $\varphi(rs) = (r-1)(s-1)$ and $\sigma(rs) = (r+1)(s+1)$ for distinct primes $r$ and $s$, we get
\begin{align*}
\varphi(n) &= (p-1)(p+7) = p^2 + 6p - 7, \\
\varphi(n+12) &= (p+1)(p+5) = p^2 + 6p + 5, \\
\sigma(n) &= (p+1)(p+9) = p^2 + 10p + 9, \\
\sigma(n+12) &= (p+3)(p+7) = p^2 + 10p + 21.
\end{align*}
By subtracting, we get $\varphi(n+12) - \varphi(n) = 12$ and $\sigma(n+12) - \sigma(n) = 12$ from just polynomial identities in terms of $p$.

($\Rightarrow$) For any squarefree semiprime $rs$ with $r$ and $s$ distinct primes,
\[
\varphi(rs) = (r-1)(s-1)=rs - (r+s) + 1, \qquad \sigma(rs) = (r+1)(s+1) = rs + (r+s) + 1.
\]
Applying this to $n = pq$ and $n+12 = p'q'$:
\begin{align*}
\varphi(n)      &= pq - (p+q) + 1                       &&= n - (p+q) + 1, \\
\varphi(n+12)   &= p'q' - (p'+q') + 1                   &&= (n+12) - (p'+q') + 1, \\
\sigma(n)       &= pq + (p+q) + 1                       &&= n + (p+q) + 1, \\
\sigma(n+12)    &= p'q' + (p'+q') + 1                   &&= (n+12) + (p'+q') + 1.
\end{align*}
Subtracting the first from the second and the third from the fourth:
\begin{align*}
\varphi(n+12) - \varphi(n) &= \bigl[(n+12) - (p'+q') + 1\bigr] - \bigl[n - (p+q) + 1\bigr] = 12 + (p+q) - (p'+q'), \\
\sigma(n+12)  - \sigma(n)  &= \bigl[(n+12) + (p'+q') + 1\bigr] - \bigl[n + (p+q) + 1\bigr] = 12 + (p'+q') - (p+q).
\end{align*}
Both expressions equal $12$ if and only if
\begin{equation}\label{eq:sum-eq}
p + q = p' + q'.
\end{equation}
Now, let $S = p+q = p'+q'$. Then, $p, q$ are the roots of $X^2 - SX + n = 0$, and $p', q'$ are the roots of $X^2 - SX + (n+12) = 0$. The discriminants are
\[
u^2 := S^2 - 4n = (q - p)^2 \ge 0, \qquad v^2 := S^2 - 4(n+12) = (q' - p')^2 \ge 0,
\]
so
\begin{equation}\label{eq:disc-diff}
u^2 - v^2 = 48.
\end{equation}

We now seek integer pairs $(u, v)$ with $u > v \ge 0$ satisfying \eqref{eq:disc-diff}. We first write $48 = (u-v)(u+v)$, which leads us to notice that $u-v$ and $u+v$ must have the same parity, since their sum $2u$ is even. 

If both factors are odd, their product is odd, which contradicts the fact that $48$ is even. If both are even, then let $u - v = 2a$ and $u + v = 2b$ with $a < b$ and $ab = 12$. The factorizations $(a, b) \in \{(1, 12),\, (2, 6),\, (3, 4)\}$ yield
\[
(u, v) = (a + b,\, b - a) \in \{(13, 11),\, (8, 4),\, (7, 1)\}.
\]

We can verify them directly: $7^2 - 1^2 = 48$, $8^2 - 4^2 = 48$, $13^2 - 11^2 = 48$. These are the only integer solutions with $u > v \ge 0$.

Now recall $S = p + q$. We consider separate cases for whether $p = 2$:

\emph{Case 1: $p, q$ both odd.} Then $S = p+q$ is even, and $u = q - p$ must satisfy $S - u = 2p$ (even), forcing $u$ even. Of the three solutions, only $(u, v) = (8, 4)$ has an even $u$. Hence
\[
q - p = 8, \quad q' - p' = 4.
\]
\begin{samepage}
Combined with $p' + q' = p + q = 2p + 8$, this gives us a linear system in $p', q'$:
\[
p' + q' = 2p + 8, \qquad q' - p' = 4.
\]
Adding and subtracting yields $2q' = 2p + 12$ and $2p' = 2p + 4$ respectively. Hence
\[
p' = p + 2, \qquad q' = p + 6.
\]
\end{samepage}
Since $p, \, p', \, q', \, q$ are prime by hypothesis, we know that $(p, p+2, p+6, p+8)$ is a prime quadruplet. In addition, $p \ge 5$, because the smallest prime quadruplet starts at $p = 5$ (if $p = 3$, then it requires $3, 5, 9, 11$ all to be prime, but $9 = 3^2$ is not).

\emph{Case 2: $p = 2$.} Then $n = 2q$ is even, so $n + 12 = p'q'$ is also even. Since $p', q'$ are both prime and $2$ is the only even prime, either $p'$ or $q'$ must be $2$. By the convention $p' < q'$, we have $p' = 2$, which leads to
\[
q' = \frac{n+12}2 = \frac{2q+12}2 = q + 6.
\] 
Condition~\eqref{eq:sum-eq} gives $2 + q = 2 + q'$, i.e., $q = q' = q + 6$, which is impossible. (Alternatively, $\varphi(2q) = q - 1$ and $\varphi(2(q+6)) = q + 5$, with difference $6 \neq 12$.) So this case yields no solutions.
 
This exhausts the cases.
\end{proof}

\begin{remark}
Theorem~\ref{thm:semiprime-char} proves that every solution for which both $n$ and $n+12$ are squarefree semiprimes comes from a prime quadruplet. The remaining problem to address is when at least one of $n$ or $n+12$ is not a squarefree semiprime. Below we show that the prime quadruplet construction gives the residue claimed by Stephan.
\end{remark}

\begin{corollary}\label{cor:residue} If $n = p(p+8)$ where $(p, p+2, p+6, p+8)$ is a prime quadruplet with $p \ge 5$, then $n \equiv 65 \pmod{72}$. Consequently, if Conjecture~\ref{McCranie} holds, all composite solutions to the jump conditions must satisfy $n \equiv 65 \pmod{72}$.
\end{corollary}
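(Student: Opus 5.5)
We determine $p$ modulo $8$ and modulo $9$ separately and then combine the two residues with the Chinese Remainder Theorem, using $72 = 8\cdot 9$ with $\gcd(8,9)=1$.

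\emph{Modulo $3$ and $9$.} Since $p \ge 5$, none of $p, p+2, p+6, p+8$ equals $3$. As they are all prime, none is divisible by $3$. If $p \equiv 1 \pmod 3$, then $p+2 \equiv 0 \pmod 3$, which is excluded. So $p \equiv 2 \pmod 3$, i.e.\ $p = 3k+2$. Then
\[
p(p+8) = (3k+2)(3k+10) = 9k^2 + 36k + 20 \equiv 20 \equiv 2 \pmod 9.
\]
The only real point is that the product is determined modulo $9$ even though $p$ is only known modulo $3$. This works because $(3k+2)(3k+10)$ has no linear term in $k$ surviving modulo $9$.

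\emph{Modulo $8$.} Since $p$ is odd, $p^2 \equiv 1 \pmod 8$. Also $8p \equiv 0 \pmod 8$. Hence $n = p^2 + 8p \equiv 1 \pmod 8$.

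\emph{Combining.} We need the unique residue $r \bmod 72$ with $r \equiv 1 \pmod 8$ and $r \equiv 2 \pmod 9$. The candidates $\equiv 1 \pmod 8$ are $1, 9, 17, 25, 33, 41, 49, 57, 65$. Among these, $65 = 7\cdot 9 + 2$ is the one congruent to $2$ modulo $9$. So $n \equiv 65 \pmod{72}$. As a check, $n = 5\cdot 13 = 65$ and $n = 11\cdot 19 = 209 = 2\cdot 72 + 65$.

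\emph{The consequence.} If Conjecture~\ref{McCranie} holds, every composite solution has the form $n = p(p+8)$ for a prime quadruplet with $p \ge 5$. The first part then gives $n \equiv 65 \pmod{72}$. I expect no step to be a genuine obstacle.
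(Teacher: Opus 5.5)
Your proof is correct and follows the paper's route: you get $n\equiv 1 \pmod 8$ from $p^2\equiv 1\pmod 8$, derive $p\equiv 2\pmod 3$ from the primality of $p$ and $p+2$, deduce $n\equiv 2\pmod 9$, and combine with the CRT. The only difference is that you obtain the residue modulo $9$ from the expansion $(3k+2)(3k+10)=9k^2+36k+20$, where the paper checks the three cases $p\equiv 2,5,8 \pmod 9$; yours is a slightly cleaner way to reach the same conclusion.
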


\begin{proof}
We use the Chinese Remainder Theorem with $72 = 8 \cdot 9$.

\emph{Modulo $8$.} Since $p$ is an odd prime, $p^2 \equiv 1 \pmod 8$. Therefore
\[
n = p(p+8) \equiv p \cdot p = p^2 \equiv 1 \pmod 8.
\]

\emph{Modulo $9$.} Since $p$ is a prime and $p\ge5$, we know $p \not\equiv 0 \pmod 3$. Similarly, $p+2$ being prime with $p \ge 5$ gives us $p \not\equiv 1 \pmod 3$. Therefore
\[
p \equiv 2 \pmod 3, \quad \text{i.e.,} \quad p \equiv 2,\, 5,\, \text{or } 8 \pmod 9.
\]
For each possible residue, we compute $n = p(p+8) \bmod 9$:
\begin{align*}
p \equiv 2: \quad & p(p+8) \equiv 2 \cdot 10 \equiv 2 \cdot 1 \equiv 2 \pmod 9, \\
p \equiv 5: \quad & p(p+8) \equiv 5 \cdot 13 \equiv 5 \cdot 4 \equiv 20 \equiv 2 \pmod 9, \\
p \equiv 8: \quad & p(p+8) \equiv 8 \cdot 16 \equiv 8 \cdot 7 \equiv 56 \equiv 2 \pmod 9.
\end{align*}
Hence, $n \equiv 2 \pmod 9$ in all cases.

\smallskip
\emph{CRT.} We seek $x \in \mathbb{Z}/72\mathbb{Z}$ with $x \equiv 1 \pmod 8$ and $x \equiv 2 \pmod 9$. Using the CRT, we obtain that $x\equiv65 \pmod {72}$.

If Conjecture~\ref{McCranie} holds, every composite solution has
the form above, so the same congruence follows.
\end{proof}

\section {Non-Existence of Single Prime Power Solutions}\label{sec:SinglePrimePower}

We contribute partial progress to Conjecture~\ref{McCranie} by proving that the case where $n$ is a power of a single prime yields no solutions. We do this by introducing the auxiliary function $Q(m)=m^2-\varphi(m)\sigma(m)$ and doing casework on how many distinct prime factors $m$ has. Finally, we create contradictions in all cases.

\begin{theorem}\label{primepower}
    No prime $p$ and integer $k \ge 2$ satisfy both $\varphi(p^k+12) = \varphi(p^k)+12$ and $\sigma(p^k+12)=\sigma(p^k)+12$.
\end{theorem}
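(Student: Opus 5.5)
The plan is to use the auxiliary function $Q(m)=m^2-\varphi(m)\sigma(m)$ with $n=p^k$ and $m=n+12$, and to split into cases according to how many distinct prime factors $m$ has. First I compute $Q(n)$. We have $\varphi(p^k)=p^{k-1}(p-1)$ and $\sigma(p^k)=(p^{k+1}-1)/(p-1)$, so $\varphi(n)\sigma(n)=p^{k-1}(p^{k+1}-1)$ and hence $Q(n)=p^{k-1}$.

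Next I rewrite the jump conditions as $\varphi(m)=\varphi(n)+12$ and $\sigma(m)=\sigma(n)+12$. Multiplying them and substituting $m^2=n^2+24n+144$ gives
\[
Q(m)=Q(n)+12\bigl(2n-\varphi(n)-\sigma(n)\bigr).
\]
For $n=p^k$ we have $n-\varphi(n)=p^{k-1}$ and $\sigma(n)-n=\sigma(p^{k-1})$. So $2n-\varphi(n)-\sigma(n)=p^{k-1}-\sigma(p^{k-1})=-\sigma(p^{k-2})$, and therefore
\[
Q(m)=p^{k-1}-12\,\sigma(p^{k-2})<p^{k-1}<n<m .
\]
It then suffices to show that no $m=p^k+12$ can have $Q(m)<m$ together with the jump conditions.

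\emph{Case $m$ prime.} Here $\varphi(m)=m-1=p^k+11$, while $\varphi(n)+12=p^k-p^{k-1}+12$. Equating these forces $p^{k-1}=1$, which contradicts $k\ge 2$. Equivalently, $Q(m)=1$ would need $p^{k-1}-12\sigma(p^{k-2})=1$; the $\varphi$ computation is the quicker route.

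\emph{Case $m=q^a$ with $a\ge 2$.} The $\varphi$-condition reads $q^a-q^{a-1}=p^k-p^{k-1}+12$. Since $q^a=p^k+12$, this gives $q^{a-1}=p^{k-1}$. With $k\ge 2$ the right side is a nontrivial power of $p$, so $q=p$ and $a=k$. Then $m=n$, which contradicts $m=n+12$.

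\emph{Case $\omega(m)\ge 2$.} This is the step I expect to need the most care, since it requires a lower bound on $Q(m)$. By multiplicativity,
\[
\varphi(m)\sigma(m)=\prod_{q^a\parallel m}\bigl(q^{2a}-q^{a-1}\bigr)=m^2\prod_{q^a\parallel m}\bigl(1-q^{-a-1}\bigr).
\]
Keeping only one factor gives $\varphi(m)\sigma(m)\le m^2(1-q^{-a-1})$, so $Q(m)\ge m^2/q^{a+1}$ for every prime power $q^a\parallel m$.

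Choose $q^a$ to be the smallest prime-power component of $m$. Because $m$ has at least two coprime such components, $r:=q^a\le\sqrt m$. Then $q^{a+1}=q\,r\le r^2\le m$, and hence $Q(m)\ge m$. This contradicts $Q(m)<m$ from the identity above.

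The only points to double-check are the algebraic identity for $Q(m)$ and the inequality $q\le r$; both are routine. All three cases give contradictions, which proves Theorem~\ref{primepower}.
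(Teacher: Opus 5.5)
Your proposal is correct and follows essentially the same route as the paper: the auxiliary function $Q(m)=m^2-\varphi(m)\sigma(m)$, the identity $Q(n+12)=Q(n)-12T(n)$ (your $\sigma(p^{k-2})$ is the paper's $T(n)=\frac{p^{k-1}-1}{p-1}$), the $\varphi$-condition (the paper's $A(n)=A(N)$) to rule out $n+12$ being a prime power, and the lower bound $Q(m)\ge m^2/q^{a+1}$ to rule out $\omega(n+12)\ge 2$. The only cosmetic difference is that you take the smallest prime-power component $q^a$ and use $q^{a+1}\le (q^a)^2\le m$, whereas the paper takes the smallest prime factor and uses $m=r_1^{e_1}u>r_1^{e_1+1}$; both give the needed inequality.
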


\begin{proof}
    We define $A(m)$, $B(m)$, and $T(m)$ such that
    \[A(m)=m-\varphi(m), \quad  B(m)=\sigma(m)-m, \quad T(m)=B(m)-A(m).\]
    Assume for the sake of contradiction that there exists $n=p^k$ that satisfies $\varphi(p^k+12) = \varphi(p^k)+12$ and $\sigma(p^k+12)=\sigma(p^k)+12$. Let $N=n+12$. Then it follows that
    \begin{align*}
        A(n)&=p^{k-1},   \quad
        B(n)=\frac{p^k-1}{p-1}, \quad
        T(n)=\frac{p^{k-1}-1}{p-1}, \\
        A(N)&=p^{k-1}, \quad
        B(N)=\frac{p^k-1}{p-1}, \quad
        T(N)=\frac{p^{k-1}-1}{p-1}.
    \end{align*}
    So we can immediately see
    \[
        A(n)=A(N), \quad B(n)=B(N), \quad T(n) = T(N).
    \]
    Now let
    \[Q(m)=A(m)B(m)-mT(m)=m^2-\varphi(m)\sigma(m).\]
    It is known that for $\displaystyle m=\prod_{i=1}^s r_i^{e_i}$, where $r_1,r_2,...,r_s$ are distinct primes and $e_i\ge1$,
    \[\frac{\varphi(m)\sigma(m)}{m^2}=\prod_{i=1}^s\left(1-\frac{1}{r_i^{e_i+1}}\right).\]
    We can use this to write
    \[Q(m)=m^2\left(1-\prod_{i=1}^s\left(1-\frac{1}{r_i^{e_i+1}}\right)\right).\]
    
    If $m$ is a power of a single prime, specifically $m=r^e$, then
    \[Q(m)=r^{2e}\left(1-\left(1-\frac{1}{r^{e+1}}\right)\right)=\frac{r^{2e}}{r^{e+1}}=r^{e-1}<m,\]
    so $Q(m)<m$ in this case.
    
    If $m$ has at least two distinct prime factors, then let $r_1$ be the smallest prime factor of $m$ such that $m=r_1^{e_1}u$ and $\gcd(r_1,u)=1$. Because $u$ contains at least one other prime $r_2>r_1$, we know $u\ge r_2 > r_1$, hence
    \begin{equation}\label{r_1}
        m=r_1^{e_1}u>r_1^{e_1+1}.
    \end{equation}
    We can separate $r_1$ from the product in the expression for $Q(m)$:
    \begin{align*}
        Q(m)=m^2\left(1-\left(1-\frac{1}{r_1^{e_1+1}}\right)\prod_{i=2}^s\left(1-\frac{1}{r_i^{e_i+1}}\right)\right).
    \end{align*}
    Now we can make the observation that
    \[1-\left(1-\frac{1}{r_1^{e_1+1}}\right)\prod_{i=2}^s\left(1-\frac{1}{r_i^{e_i+1}}\right)>\frac{1}{r_1^{e_1+1}},\]
    which implies
    \[Q(m)>\frac{m^2}{r_1^{e_1+1}}.\]
    Using (\ref{r_1}), we see that
    \[\frac{m^2}{r_1^{e_1+1}}>m,\]
    so $Q(m)>m$ in this case.
    
    Now we look at $n$ and $N$. Since $n=p^k$, we have 
    \[Q(n)=p^{k-1}\cdot\frac{p^k-1}{p-1}-p^k\cdot\frac{p^{k-1}-1}{p-1}=p^{k-1}<n.\]
    Then we also know 
    \begin{align*}
        Q(N)&=A(N)B(N)-NT(N)\\
        &=A(n)B(n)-(n+12)T(n)\\
        &=\left(A(n)B(n)-nT(n)\right)-12T(n)\\
        &=Q(n)-12T(n).
    \end{align*}
    Note that $T(n)=\frac{p^{k-1}-1}{p-1}>0$, so we can now write
    \begin{equation}\label{Q(N)<N}
        Q(N)=Q(n)-12T(n)<Q(n)<n<N.
    \end{equation}
    
    If $N$ has at least two distinct prime factors, then $Q(N)>N$, which contradicts $Q(N)<N$ from (\ref{Q(N)<N}). If $N$ is a power of a single prime, say $q^\ell$, where $q$ is prime and $\ell\ge1$, then the fact that $A(n)=A(N)$ implies
    \[p^{k-1}=q^{\ell-1}.\]

    If $\ell=1$, then the right-hand side is $1$, which contradicts $p^{k-1}>1$. So $\ell\ge2$. By unique factorization, this forces $p=q$ and $k-1=\ell-1$. Thus $k=\ell$ and
    \[N=q^\ell=p^k=n,\]
    contradicting $N=n+12$. This exhausts all cases.
\end{proof}

% \section{Conditions on odd prime factors}

% We have shown that $n$ cannot be of a single prime power and have conjectured that both $n$ and $n+12$ must be semiprimes. We prove here why at least one of $n$ or $n+12$ cannot contain more than $2$ distinct odd prime factors.
% \begin{lemma}
%     For composite integers $n$ and $n+12$, if $\varphi(n+12)=\varphi(n) +12$, then at least one of $n$ or $n+12$ cannot have more than $2$ distinct odd prime factors.
% \end{lemma}

% \begin{proof}
%     Let $k_1$ be the number of distinct odd prime factors that $n$ has, and $k_2$ be the number of distinct odd prime factors that $n+12$ has. A divisibility property of Euler's totient function states that if an integer $m$ has $k$ distinct odd prime factors, then $2^k$ divides $\varphi(m)$. Using this property, we can obtain
%     \begin{align*}
%         \varphi(n) \equiv 0 &\pmod {2^{k_1}} \\
%         \varphi(n+12) \equiv 0 &\pmod {2^{k_2}}.
%     \end{align*}
%     Recall that the jump condition is $\varphi(n+12)-\varphi(n)=12$. Taking both sides in modulo $2^{\min(k_1, k_2)}$, we get
%     \[0 \equiv 12 \pmod{2^{\min(k_1, k_2)}}.\]
%     For this to hold, we must have $2^{\min(k_1, k_2)} \le 4$, which means $\min(k_1, k_2) \le 2$. 
% \end{proof}

\section{Computational Evidence}\label{sec:empirical}

While we do not provide a complete proof for Conjecture~\ref{McCranie} in this paper, we present the solutions found from a complete search over $n \in [2, 10^{12})$ using a \Cpp{} program \cite{IntegerSequenceTesting}. For each composite $n$ in this range, we computed $\varphi(n)$, $\varphi(n+12)$, $\sigma(n)$, and $\sigma(n+12)$, and tested both jump conditions. The search returned exactly 166 solutions. Every solution is congruent to $65$ modulo $72$, which supports Stephan's corrected observation. Every solution is also a squarefree semiprime $n=p(p+8)$, where $(p,p+2,p+6,p+8)$ is a prime quadruplet. The complete list of solutions is given in Appendix~\ref{app:solutions}. The existing OEIS entry \texttt{A056777} \cite{OEIS-A056777} has records up to $1{,}006{,}475{,}609$. We expand the search range by approximately $1{,}000$ times.

\subsection*{Algorithmic Method}

A flat linear sieve allocates arrays of size $N$. However, this is approximately $240\,\mathrm{GB}$ at $N = 10^{10}$. For $N = 10^{12}$, we use a segmented sieve to avoid excessive memory usage. Using the sieve of Eratosthenes, we first precomputed approximately $78{,}000$ small primes up to $\sqrt{N} \approx 10^6$. Then, the range $[2,N+12)$ is processed in smaller blocks of size $B\approx5\times10^5$, allowing the values at $n+12$ to be computed for every candidate $n\in[2,N)$. Adjacent blocks are handled so that the final twelve required values beyond $N-1$ are included in the computation. For each block $[\ell, \ell + B)$, we use three arrays of length $B$ to track the remaining cofactor, the running totient value $\varphi$, and the running sum of divisors $\sigma$. Each small prime $p$ sieves its multiples within the block. It divides out the full $p$-power and multiplies in the corresponding $\varphi$ and $\sigma$ factors. Any residual factor greater than $1$ after all small primes is a large prime, handled directly. A number $n$ is prime if and only if $\varphi(n) = n - 1$. Each worker required approximately $12\,\mathrm{MiB}$ for its three block arrays. With $P$ concurrent workers, the total memory usage is $O(PB+\pi(\sqrt{N}))$; for the $32$ workers used in our computation, the aggregate block-array memory was approximately $384\,\mathrm{MiB}$. The total work is $O\!\left(N\log\log N+\frac{N}{B}\pi(\sqrt{N})\right)$. For the $10^{12}$ search, the algorithm was parallelized across 32 cores via OpenMP and executed on an AWS \texttt{c7i.8xlarge} instance, completing in $5536\,\mathrm{s}$.

\bigskip
\noindent \textbf{Acknowledgments.} We thank Ralf Stephan and Jud McCranie for the original conjectures. We thank Labos Elemer and Jud McCranie for the data in the OEIS entry \texttt{A056777}. We also thank Jeffrey Shallit for helpful feedback.

\newpage
\appendix
\section{Computed Solutions}\label{app:solutions}

\small
\begin{longtable}{rrrr}
\caption{All composite solutions $n$ up to $10^{12}$ satisfying the simultaneous jump conditions.}
\label{tab:solutions}\\
\toprule 
$n$ & $n$ & $n$ & $n$ \\
\midrule
\endfirsthead

\toprule
$n$ & $n$ & $n$ & $n$ \\
\midrule
\endhead
65 & 14,934,062,009 & 155,878,884,209 & 437,046,599,009 \\
209 & 18,350,766,209 & 158,042,027,009 & 440,345,052,209 \\
11,009 & 20,783,547,209 & 158,209,040,009 & 441,779,562,209 \\
38,009 & 24,735,425,609 & 161,712,558,209 & 444,135,609,209 \\
680,609 & 27,458,147,009 & 162,219,645,209 & 462,801,287,009 \\
2,205,209 & 27,837,254,009 & 169,772,841,209 & 464,108,375,009 \\
3,515,609 & 29,297,457,209 & 175,607,093,009 & 478,483,475,609 \\
4,347,209 & 35,206,893,209 & 177,118,931,009 & 497,257,677,209 \\
10,595,009 & 37,972,368,209 & 182,538,290,009 & 511,446,674,009 \\
12,006,209 & 38,312,190,209 & 195,872,630,609 & 539,453,525,609 \\
31,979,009 & 40,600,235,009 & 197,442,479,009 & 542,233,413,209 \\
89,019,209 & 40,733,330,609 & 204,787,926,209 & 546,704,966,009 \\
169,130,009 & 47,247,543,209 & 214,790,537,009 & 590,123,558,009 \\
244,766,009 & 50,780,369,009 & 216,378,477,209 & 597,567,650,609 \\
247,590,209 & 57,621,602,009 & 218,532,875,609 & 633,289,682,009 \\
258,084,209 & 59,392,127,009 & 220,979,907,209 & 645,523,868,009 \\
325,622,009 & 61,308,236,009 & 227,543,310,209 & 662,701,824,209 \\
357,777,209 & 61,501,520,009 & 240,663,830,609 & 675,955,287,209 \\
377,330,609 & 66,494,358,209 & 245,634,228,209 & 678,522,875,609 \\
441,630,209 & 67,815,972,209 & 250,235,055,209 & 688,443,575,609 \\
496,175,609 & 71,120,889,209 & 260,727,678,209 & 695,080,701,209 \\
640,343,009 & 72,261,504,209 & 269,158,628,009 & 700,694,555,609 \\
1,006,475,609 & 76,200,842,009 & 287,773,238,009 & 715,690,620,209 \\
1,214,174,009 & 81,079,715,009 & 288,127,400,609 & 730,896,755,609 \\
1,917,126,209 & 81,387,531,209 & 291,065,645,009 & 732,282,390,209 \\
2,636,309,009 & 86,621,319,209 & 301,582,197,209 & 736,086,782,009 \\
3,061,962,209 & 87,542,015,609 & 312,721,416,209 & 766,088,820,209 \\
3,967,110,209 & 89,685,275,609 & 317,436,462,209 & 767,402,280,209 \\
4,517,856,209 & 90,297,245,009 & 324,951,302,009 & 776,998,175,609 \\
4,829,555,009 & 91,200,980,009 & 327,933,749,009 & 791,877,515,609 \\
5,216,450,609 & 106,370,561,009 & 343,296,387,209 & 823,365,686,009 \\
5,969,880,209 & 111,840,080,609 & 353,816,780,609 & 865,039,505,609 \\
6,351,293,009 & 116,236,674,209 & 357,215,405,609 & 879,947,183,009 \\
6,568,292,009 & 119,989,496,009 & 368,819,363,009 & 896,174,622,209 \\
6,843,425,609 & 121,093,560,209 & 387,188,840,009 & 911,977,250,609 \\
7,888,104,209 & 125,496,605,009 & 392,658,890,609 & 918,808,517,009 \\
9,573,644,009 & 128,812,799,009 & 399,531,447,209 & 920,592,275,609 \\
9,827,748,209 & 130,476,276,209 & 399,834,905,609 & 953,171,453,009 \\
10,224,243,209 & 140,816,315,009 & 401,277,906,209 & 956,630,705,609 \\
12,065,924,009 & 151,083,803,009 & 401,696,102,009 & 967,164,068,009 \\
13,580,406,209 & 151,760,889,209 & 427,931,847,209 &  \\
14,231,297,009 & 153,871,830,209 & 432,299,675,009 &  \\

\bottomrule
\end{longtable}

\end{document}